\DeclareMathOperator{\Ker}{Ker}
\DeclareMathOperator{\Tr}{Tr}
\DeclareMathOperator{\Span}{span}
\newcommand{\abs}[1]{\lvert#1\rvert}
\newcommand{\Abs}[1]{\left\lvert#1\right\rvert}
\newcommand{\norm}[1]{\lVert#1\rVert}
\newcommand{\jap}[1]{\langle#1\rangle}
\newcommand{\bbN}{{\mathbb N}}
\newcommand{\bbR}{{\mathbb R}}
\newcommand{\bbC}{{\mathbb C}}
\newcommand{\calC}{\mathcal{C}}
\newcommand{\1}{\mathbf{1}}
\newcommand{\dd}{\mathrm d}
\numberwithin{equation}{section}
\theoremstyle{plain}
\newtheorem{theorem}{\bf Theorem}[section]
\newtheorem*{theorem*}{Theorem}
\newtheorem{lemma}[theorem]{\bf Lemma}
\newtheorem*{proposition*}{\bf Proposition}
\theoremstyle{definition}
\newtheorem*{definition*}{\bf Definition}
\theoremstyle{remark}
\newtheorem*{remark*}{\bf Remark}
\newtheorem*{example*}{\bf Example}
\newcommand*\bigcdot{\mathpalette\bigcdot@{.5}}
\newcommand*\bigcdot@[2]{\mathbin{\vcenter{\hbox{\scalebox{#2}{\,\,$\m@th#1\bullet$\,\,}}}}}
\begin{document}

\begin{frontmatter}

\title{The spectral map for weighted Cauchy matrices is an involution}

\author[1]{Alexander Pushnitski}
\ead{alexander.pushnitski@kcl.ac.uk}

\author[2]{Sergei Treil \fnref{fn2}}
\ead{treil@math.brown.edu}

\affiliation[1]{organization={Department of Mathematics,  Kings College London},
address={Strand,},
city={London},
postcode={WC2R~2LS},
country={U.K.}
}

\affiliation[2]{organization={Department of Mathematics, Brown University},
city={Providence},
state={RI},
postcode={02912},
country={USA}
}

\fntext[fn2]{supported  in part by the National Science Foundation under the grant   
DMS-2154321}

\date{\today}

\begin{abstract}
Let $N$ be a natural number. We consider weighted Cauchy matrices of the form 
\[
\mathcal{C}_{a,A}=\left\{\frac{\sqrt{A_j A_k}}{a_k+a_j}\right\}_{j,k=1}^N,
\]
where $A_1,\dots,A_N$ are positive real numbers and $a_1,\dots,a_N$ are distinct positive real 
numbers, listed in increasing order. Let $b_1,\dots,b_N$ be the eigenvalues of $\mathcal{C}_{a,A}$, 
listed in increasing order. Let $B_k$ be positive real numbers such that $\sqrt{B_k}$ is the 
Euclidean norm of the orthogonal projection of the vector 
\[
v_A=(\sqrt{A_1},\dots,\sqrt{A_N})
\]
onto the $k$'th eigenspace of $\mathcal{C}_{a,A}$. We prove that the spectral map $(a,A)\mapsto 
(b,B)$ is an involution and discuss simple properties of this map. 
\end{abstract}

\begin{keyword}
weighted Cauchy matrix \sep Hankel operators \sep spectral map \sep involution
\MSC[2020]{15B05, 15B57 }
\end{keyword}

\end{frontmatter}

\section{Introduction and main result}\label{sec.z}
Let $N\in\bbN$. 
\begin{enumerate}[(i)]
\item
Let 
$a=(a_1,\dots, a_N)$, where $a_k$ are \emph{distinct} positive real numbers listed in increasing 
order:
\[
a_1<a_2<\dots<a_N.
\]
\item
Let $A=(A_1,\dots,A_N)$, where $A_k$ are positive real numbers. 
\end{enumerate}
For $a$ and $A$ be as above, we consider the \emph{weighted Cauchy matrix} of the form
\begin{equation}
\calC_{a,A}=\left\{\frac{\sqrt{A_j A_k}}{a_k+a_j}\right\}_{j,k=1}^N.
\label{eq:17}
\end{equation}
It is well known (and we will prove it below) that $\calC_{a,A}$ is positive definite.
Furthermore, we will see that
\begin{equation}
v_A=(\sqrt{A_1},\dots,\sqrt{A_N})
\label{eq:19}
\end{equation}
is a cyclic vector for $\calC_{a,A}$, and so $\calC_{a,A}$ has $N$ positive distinct eigenvalues. 
Let us write these eigenvalues in increasing order as $b=(b_1,\dots, b_N)$, and let $f_1,\dots,f_N$ 
be the corresponding normalised (with respect to the standard Euclidean norm in $\bbC^N$) 
eigenvectors. 

Let us define 
\begin{equation}
B_k=\abs{\jap{v_A,f_k}}^2,
\label{eq:20}
\end{equation}
where $\jap{\cdot,\cdot}$ is the standard Euclidean inner product in $\bbC^N$. In other words, 
$B_k$ are positive numbers such that $\sqrt{B_k}$ is the norm of the projection of $v_A$ onto the 
$k$'th eigenspace of $\calC_{a,A}$. 

We have thus defined the \emph{spectral map} $\Omega: Z_N\to Z_N$
\[
\Omega: (a,A)\mapsto (b,B),
\]
where $B=(B_1,\dots,B_N)$, and  $Z_N$ is the set of all pairs $(a,A)$ satisfying the above 
conditions (i) and (ii). Our main result is:

\begin{theorem}\label{thm.4}
The map $\Omega$ is an involution on $Z_N$. In particular, it is a bijection on $Z_N$. 
\end{theorem}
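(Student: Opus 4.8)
The plan is to reduce the whole statement to a single Sylvester/Lyapunov identity satisfied by the weighted Cauchy matrix. First I would record the algebraic heart of the matter: writing $\Lambda_a=\diag(a_1,\dots,a_N)$ and $v_A$ as in \eqref{eq:19}, a one-line entrywise computation gives
\[
\Lambda_a\,\calC_{a,A}+\calC_{a,A}\,\Lambda_a=v_Av_A^{\top},
\]
because the $(j,k)$ entry of the left-hand side is $(a_j+a_k)\tfrac{\sqrt{A_jA_k}}{a_j+a_k}=\sqrt{A_jA_k}$. This identity is a property of the \emph{construction}, so it holds verbatim for every admissible pair. Since $\calC_{a,A}$ is positive definite, the linear map $\calL_{\calC_{a,A}}\colon X\mapsto\calC_{a,A}X+X\calC_{a,A}$ is invertible (its eigenvalues are the sums $b_k+b_l>0$), and likewise $X\mapsto\Lambda_aX+X\Lambda_a$ is invertible; thus the displayed equation determines $\calC_{a,A}$ uniquely from $(\Lambda_a,v_A)$, equivalently $\calC_{a,A}=\int_0^\infty \ee^{-t\Lambda_a}v_Av_A^{\top}\ee^{-t\Lambda_a}\,\dd t$. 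The conceptual content is that $(a,A)$ are exactly the eigenvalues-and-$v_A$-weights of the \emph{diagonal} pair $(\Lambda_a,v_A)$, so the identity synthesises $\calC_{a,A}$ from its input spectral data.

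Next I would diagonalise. Let $O$ be the orthogonal matrix whose rows are the normalised eigenvectors $f_1,\dots,f_N$, the sign of each $f_k$ fixed so that $\jap{v_A,f_k}\ge 0$. Then $O\calC_{a,A}O^{\top}=\Lambda_b:=\diag(b_1,\dots,b_N)$, and by the definition \eqref{eq:20} the vector $Ov_A$ has nonnegative entries $\sqrt{B_k}$, i.e.\ $Ov_A=v_B$; this is just a restatement of $\Omega(a,A)=(b,B)$. At this point I would also check, invoking the positive definiteness, cyclicity of $v_A$, and simplicity of the spectrum established earlier, that $(b,B)\in Z_N$ (the $b_k$ distinct and positive, the $B_k$ positive), so that $\Omega$ may legitimately be applied a second time.

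The involution then falls out by conjugation. Applying the identity of the first step to $(b,B)$ shows $\calC_{b,B}$ is the unique solution of $\Lambda_bX+X\Lambda_b=v_Bv_B^{\top}$. Conjugating this equation by $O$ and using $O^{\top}\Lambda_bO=\calC_{a,A}$ and $O^{\top}v_B=v_A$, the matrix $K:=O^{\top}\calC_{b,B}O$ satisfies $\calC_{a,A}K+K\calC_{a,A}=v_Av_A^{\top}$. But $\Lambda_a$ satisfies this very equation, by the first step (the sum $\calC_{a,A}X+X\calC_{a,A}$ is symmetric in its two terms). Invertibility of $\calL_{\calC_{a,A}}$ forces $K=\Lambda_a$, that is $\calC_{b,B}=O\Lambda_aO^{\top}$. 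Reading off the spectral decomposition, $\calC_{b,B}$ has eigenvalues $a_1<\dots<a_N$ with eigenvectors the columns $Oe_j$ of $O$, and the weight of $v_B$ at the $j$-th eigenspace is $\jap{v_B,Oe_j}^2=(O^{\top}v_B)_j^2=(v_A)_j^2=A_j$. Hence $\Omega(b,B)=(a,A)$, i.e.\ $\Omega\circ\Omega=\mathrm{id}$.

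I expect the only genuinely load-bearing computation to be the entrywise verification of the Sylvester identity; everything after it is conjugation plus uniqueness for a Sylvester equation with positive-definite coefficient. Accordingly, the main obstacle is not in this final argument but in the conventions that must precede it: pinning down the signs and the increasing-order labelling so that $Ov_A=v_B$ holds with nonnegative entries, and confirming $(b,B)\in Z_N$. These rest on positive definiteness, cyclicity of $v_A$, and simplicity of the spectrum, which I would invoke from the earlier part of the paper rather than reprove here.
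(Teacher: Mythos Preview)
Your proposal is correct and follows essentially the same route as the paper: the displacement (Sylvester) identity for $\calC_{a,A}$, uniqueness of solutions to a Sylvester equation with positive-definite coefficient, the orthogonal diagonalisation with signs fixed so that $Ov_A=v_B$, and then the conjugation argument yielding $\calC_{b,B}=O\Lambda_aO^{\top}$, from which $\Omega(b,B)=(a,A)$ is read off. The only cosmetic difference is that the paper conjugates the $(a,A)$ displacement equation by $U$ and invokes uniqueness with coefficient $D_b$, whereas you conjugate the $(b,B)$ equation by $O^{\top}$ and invoke uniqueness with coefficient $\calC_{a,A}$; both are equivalent.
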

This includes three non-trivial statements:
\begin{itemize}
\item
The map $\Omega$ is an injection, i.e. the pair $(b,B)$ uniquely determines the matrix 
$\calC_{a,A}$. 
\item
The map $\Omega$ is a surjection, i.e. any pair $(b,B)$ satisfying (i),(ii) corresponds to some 
matrix $\calC_{a,A}$. 
\item
In order to reconstruct the pair $(a,A)$ (i.e.~the Cauchy matrix $\calC_{a,A}$) from $(b,B)$, one 
needs to apply the map $\Omega$ to $(b,B)$. 
\end{itemize}
The proof of Theorem~\ref{thm.4} is given in Section~\ref{sec.e}. 

Let us also discuss some properties of the map $\Omega$. To give some intuition, we note that for 
$N=1$ the spectral map is immediate to compute:
\[
\Omega(a,A)=\left(\frac{A}{2a},A\right), \quad N=1.
\]
First we list some simple scaling properties of $\Omega$. 
\begin{theorem}\label{thm.5}
Let $\Omega(a,A)=(b,{B})$; then for any $t>0$ and $T>0$ we have 
\[
\Omega(ta,{T} A)=\bigl(\tfrac{{T}}t b,{T}{B}\bigr).
\]
\end{theorem}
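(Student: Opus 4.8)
The plan is to observe that the two scalings act on the matrix $\calC_{a,A}$ in a completely transparent way: together they merely multiply the matrix by the positive scalar $T/t$, and this single observation yields all three scaling relations at once.

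First I would substitute $a\mapsto ta$ and $A\mapsto TA$ directly into the defining formula \eqref{eq:17}. Since $\sqrt{(TA_j)(TA_k)}=T\sqrt{A_jA_k}$ and $ta_k+ta_j=t(a_k+a_j)$, each entry is multiplied by $T/t$, so
\[
\calC_{ta,TA}=\frac{T}{t}\,\calC_{a,A}.
\]
This is the crux of the matter; everything else is a routine consequence.

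Next I would read off the eigenvalue scaling. Multiplying a self-adjoint matrix by a positive scalar $T/t$ leaves its eigenspaces (hence the normalised eigenvectors $f_1,\dots,f_N$) unchanged and multiplies each eigenvalue by $T/t$. Because $T/t>0$, the increasing order of the eigenvalues is preserved, so the eigenvalues of $\calC_{ta,TA}$, listed in increasing order, are exactly $\tfrac{T}{t}b_1,\dots,\tfrac{T}{t}b_N$. This gives the first component $\tfrac{T}{t}b$ of $\Omega(ta,TA)$.

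Finally I would address the $B$-component. The cyclic vector associated with the scaled weights is $v_{TA}=(\sqrt{TA_1},\dots,\sqrt{TA_N})=\sqrt{T}\,v_A$. Since the eigenvectors $f_k$ are unchanged by the previous step, the new spectral weights are
\[
\abs{\jap{v_{TA},f_k}}^2=\abs{\sqrt{T}\,\jap{v_A,f_k}}^2=T\,\abs{\jap{v_A,f_k}}^2=T B_k,
\]
so the second component is $TB$, completing the identity $\Omega(ta,TA)=\bigl(\tfrac{T}{t}b,TB\bigr)$. There is no genuine obstacle here: the only points requiring a moment's care are that the positive factor $T/t$ preserves the increasing ordering of eigenvalues and that positive scaling leaves the normalised eigenvectors fixed, both of which are immediate.
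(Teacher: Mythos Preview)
Your proof is correct and follows essentially the same approach as the paper: both arguments rest on the single observation that $\calC_{ta,TA}=\tfrac{T}{t}\calC_{a,A}$, from which the eigenvalue scaling and the relation $v_{TA}=\sqrt{T}\,v_A$ yield the conclusion immediately. If anything, you are slightly more careful in noting explicitly that $T/t>0$ preserves the increasing ordering of the eigenvalues.
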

Next, we have some identities that link $(a,A)$ and $\Omega(a,A)$. 
\begin{theorem}\label{thm.6}
Let $(b,{B})=\Omega(a,A)$. Then the following identities hold true:
\begin{align}
\sum_{k=1}^NA_k&=\sum_{k=1}^N{B}_k,
\label{eq:12}
\\
\frac12\sum_{k=1}^N \frac{A_k}{a_k}&=\sum_{k=1}^N b_k,
\label{eq:13}
\\
\sum_{j,k=1}^N\frac{A_jA_k}{(a_j+a_k)^2}&=\sum_{k=1}^N b_k^2.
\label{eq:14}
\end{align}
\end{theorem}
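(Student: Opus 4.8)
The plan is to read all three identities as elementary spectral invariants of the matrix $\calC_{a,A}$, each one extracted from a standard quantity attached to the matrix: the squared Euclidean norm of $v_A$, the trace, and the Hilbert--Schmidt norm. The only structural input I would use is that $\calC_{a,A}$ is real symmetric with simple spectrum (noted in the introduction, where $v_A$ is cyclic), so its normalized eigenvectors $f_1,\dots,f_N$ form an orthonormal basis of $\bbC^N$.

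For \eqref{eq:12} I would apply Parseval's identity to $v_A$ in the orthonormal basis $\{f_k\}_{k=1}^N$, obtaining
\[
\sum_{k=1}^N B_k=\sum_{k=1}^N\abs{\jap{v_A,f_k}}^2=\norm{v_A}^2=\sum_{k=1}^N A_k,
\]
the final equality being the definition \eqref{eq:19} of $v_A$. For \eqref{eq:13} I would evaluate the trace in two ways: on the one hand the trace equals the sum of the eigenvalues, so $\Tr\calC_{a,A}=\sum_{k=1}^N b_k$; on the other hand, reading off the diagonal entries of \eqref{eq:17} gives $\Tr\calC_{a,A}=\sum_{j=1}^N\frac{\sqrt{A_jA_j}}{a_j+a_j}=\frac12\sum_{j=1}^N\frac{A_j}{a_j}$.

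For \eqref{eq:14} I would use that the sum of squares of the eigenvalues equals $\Tr(\calC_{a,A}^2)$, and, exploiting the symmetry of $\calC_{a,A}$, write
\[
\sum_{k=1}^N b_k^2=\Tr(\calC_{a,A}^2)=\sum_{j,k=1}^N(\calC_{a,A})_{jk}^2=\sum_{j,k=1}^N\frac{A_jA_k}{(a_j+a_k)^2},
\]
which is exactly the desired right-hand side. None of these steps presents a genuine obstacle; the only point requiring a moment's care is the passage to an orthonormal eigenbasis, which is automatic from the self-adjointness of $\calC_{a,A}$ and simultaneously underwrites Parseval in the first identity and the legitimacy of summing the (simple) eigenvalues in the other two.
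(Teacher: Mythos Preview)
Your proof is correct and follows essentially the same approach as the paper: the paper packages the eigenbasis into an orthogonal matrix $U$ with $Uv_A=v_B$ and $U\calC_{a,A}U^*=D_b$, then reads off \eqref{eq:12} from $\norm{Uv_A}=\norm{v_A}$ and \eqref{eq:13}, \eqref{eq:14} from $\Tr\calC_{a,A}=\Tr D_b$ and $\Tr\calC_{a,A}^2=\Tr D_b^2$, which is exactly your Parseval/trace argument in slightly different clothing.
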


The proofs of Theorems~\ref{thm.5} and \ref{thm.6} are given in Section~\ref{sec.f}.

\emph{Notation:}  $e_1,\dots,e_N$ is the standard basis in $\bbC^N$. 
We denote by $D_a$ the diagonal $N\times N$ matrix with elements $(a_1, \dots, a_N)$ on the 
diagonal.

\section{Proofs of the spectral properties of $\calC_{a,A}$ }\label{sec.b}

The following lemma is well-known; see e.g. \cite{Fiedler1}.
\begin{lemma}\label{lma.1}
The matrix $\calC_{a,A}$ is positive definite. 
\end{lemma}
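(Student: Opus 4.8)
The plan is to prove that $\calC_{a,A}$ is positive definite, and the cleanest route is through an integral representation that exhibits the matrix as a Gram matrix of linearly independent vectors.

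First I would write the matrix entries using the elementary integral identity
\[
\frac{1}{a_k+a_j}=\int_0^\infty \ee^{-(a_j+a_k)t}\,\dd t,
\]
valid because all $a_k>0$. Substituting this into \eqref{eq:17} gives
\[
(\calC_{a,A})_{jk}=\sqrt{A_jA_k}\int_0^\infty \ee^{-a_j t}\ee^{-a_k t}\,\dd t
=\int_0^\infty \phi_j(t)\,\overline{\phi_k(t)}\,\dd t,
\]
where $\phi_k(t)=\sqrt{A_k}\,\ee^{-a_k t}$ are real-valued functions in $L^2(0,\infty)$. Thus $\calC_{a,A}$ is the Gram matrix $\{\jap{\phi_j,\phi_k}_{L^2}\}_{j,k}$ of the family $\{\phi_k\}_{k=1}^N$, which immediately shows $\calC_{a,A}\ge 0$: for any $x=(x_1,\dots,x_N)\in\bbC^N$,
\[
\jap{\calC_{a,A}x,x}=\int_0^\infty \Abs{\sum_{k=1}^N x_k\,\phi_k(t)}^2\,\dd t\ge 0.
\]

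To upgrade this to strict positive definiteness, I would show the Gram vectors are linearly independent, so that the quadratic form vanishes only at $x=0$. Since each $A_k>0$, linear independence of $\{\phi_k\}$ is equivalent to linear independence of the pure exponentials $\{\ee^{-a_k t}\}_{k=1}^N$. These are linearly independent on $(0,\infty)$ because the $a_k$ are \emph{distinct}: a nontrivial linear combination vanishing identically would force the entire function $\sum_k x_k \ee^{-a_k z}$ to be zero, which is impossible for distinct exponents $a_k$ (one may also invoke the nonvanishing of the Vandermonde-type determinant, or differentiate and evaluate at a point). Consequently the integrand above is not identically zero when $x\neq 0$, and the integral is strictly positive.

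I do not anticipate a serious obstacle here; the only point requiring a little care is the linear independence of the exponentials, and I would handle it via the analyticity argument just sketched. An alternative proof avoiding integrals uses the classical Cauchy determinant formula: since $\calC_{a,A}=D\,\{1/(a_j+a_k)\}\,D$ with $D=\diag(\sqrt{A_1},\dots,\sqrt{A_N})$, every leading principal minor factors as a positive constant times a product of the form $\prod_{j<k}(a_k-a_j)^2\big/\prod_{j,k}(a_j+a_k)$, which is strictly positive; positive definiteness then follows from Sylvester's criterion. I would present the Gram-matrix argument as the main proof since it is the most transparent and sets up the functional-analytic viewpoint used later in the paper.
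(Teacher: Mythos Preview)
Your proposal is correct and follows essentially the same argument as the paper: both use the integral representation $\frac{1}{a_j+a_k}=\int_0^\infty \ee^{-(a_j+a_k)t}\,\dd t$ to write $\jap{\calC_{a,A}x,x}$ as the $L^2$-norm squared of $\sum_k \sqrt{A_k}x_k\,\ee^{-a_k t}$, and then invoke linear independence of the exponentials (from distinctness of the $a_k$) to conclude the kernel is trivial. Your Gram-matrix phrasing and the aside on the Cauchy determinant are nice additions but do not change the underlying method.
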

\begin{proof}
We have
\begin{align*}
\jap{\calC_{a,A}x,x}
&=
\sum_{j,k=1}^N\sqrt{A_jA_k} \frac{x_j\overline{x_k}}{a_k+a_j}
\\
&=\sum_{j,k=1}^N\sqrt{A_jA_k} x_j\overline{x_k}\int_0^\infty e^{-a_jt}e^{-a_kt}\dd t
=
\int_0^\infty \Abs{\sum_{k=1}^N\sqrt{A_k}x_k e^{-a_kt}}^2\dd t\geq0.
\end{align*}
This proves that $\calC_{a,A}$ is positive \emph{semi}-definite. 
To prove that the kernel of $\calC_{a,A}$ is trivial, suppose that the above inner product 
vanishes. Then it is clear that the sum 
\[
\sum_{k=1}^N\sqrt{A_k}x_k e^{-a_kt}
\]
vanishes identically. Since all $a_k$ are distinct by assumption, it follows that $\sqrt{A_k}x_k=0$ 
for all $k$. Since all $A_k$ are positive, it follows that $x_k=0$ for all $k$. Thus, the kernel of 
$\calC_{a,A}$ is trivial. 
\end{proof}

It is a classical fact (see e.g. \cite{KS}) that Cauchy matrices satisfy a \emph{displacement 
equation} (known as the Lyapunov equation in control theory). 

\begin{lemma}\label{lma.2}
The displacement equation 
\begin{equation}
\calC_{a,A}D_a+D_a\calC_{a,A}=\jap{\bigcdot,v_A}v_A
\label{eq:1}
\end{equation}
holds true. Here $v_A$ is the vector \eqref{eq:19} and the right hand side denotes the rank one 
matrix $\{\sqrt{A_jA_k}\}_{j,k=1}^N$. 
\end{lemma}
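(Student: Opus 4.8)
The plan is to verify \eqref{eq:1} by a direct entry-by-entry computation, since both sides are explicit $N\times N$ matrices and there is nothing analytic to do. First I would record the effect of the diagonal factors: right multiplication by $D_a$ scales the $k$'th column of $\calC_{a,A}$ by $a_k$, i.e.\ $(\calC_{a,A}D_a)_{jk}=\calC_{a,A,jk}\,a_k$, while left multiplication by $D_a$ scales the $j$'th row by $a_j$, i.e.\ $(D_a\calC_{a,A})_{jk}=a_j\,\calC_{a,A,jk}$. Hence the $(j,k)$ entry of the left-hand side of \eqref{eq:1} is
\[
\frac{\sqrt{A_jA_k}}{a_j+a_k}\,a_k+a_j\,\frac{\sqrt{A_jA_k}}{a_j+a_k}
=\frac{\sqrt{A_jA_k}}{a_j+a_k}\,(a_j+a_k)=\sqrt{A_jA_k},
\]
where the whole point is that the denominator $a_j+a_k$ — the very feature that makes $\calC_{a,A}$ a Cauchy matrix — is exactly cancelled by the sum of the two diagonal weights.

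It then remains to identify the right-hand side. With the inner product used throughout (linear in the first slot, antilinear in the second), the rank one operator $x\mapsto\jap{x,v_A}v_A$ has $(j,k)$ entry $\jap{e_k,v_A}\,\jap{v_A,e_j}=(v_A)_j\,\overline{(v_A)_k}$, and since $v_A=(\sqrt{A_1},\dots,\sqrt{A_N})$ is real this equals $\sqrt{A_j}\sqrt{A_k}=\sqrt{A_jA_k}$, which matches the matrix $\{\sqrt{A_jA_k}\}_{j,k=1}^N$ named in the statement. Thus both sides have identical entries and \eqref{eq:1} follows. I expect no genuine obstacle here; the only points requiring care are bookkeeping ones, namely keeping the left/right diagonal multiplications straight and pinning down the inner-product convention in the rank one operator $\jap{\bigcdot,v_A}v_A$ — which is precisely why the lemma spells out that this operator \emph{is} the matrix $\{\sqrt{A_jA_k}\}_{j,k=1}^N$.
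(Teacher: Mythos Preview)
Your proposal is correct and follows exactly the paper's approach: a direct entry-by-entry verification showing that the $(j,k)$ entry of the left-hand side equals $\sqrt{A_jA_k}$. The paper's proof is the same one-line computation; your version simply spells out the row/column scaling and the rank-one identification more explicitly.
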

\begin{proof}
By inspection, the matrix elements of the left hand side are 
\[
\frac{\sqrt{A_jA_k}}{a_k+a_j}a_j+a_k\frac{\sqrt{A_jA_k}}{a_k+a_j}=\sqrt{A_jA_k},
\]
as required.
\end{proof}

\begin{lemma}\label{lma.3}
The vector $v_A$ is a cyclic element of ${\calC}_{a,A}$, and therefore all eigenvalues of 
${\calC}_{a,A}$ are simple. 
\end{lemma}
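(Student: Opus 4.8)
The plan is to exploit the displacement equation \eqref{eq:1} to pin down how $\calC_{a,A}$ acts on each of its eigenvectors, and then to deduce cyclicity from the resulting fact that no eigenvector can be orthogonal to $v_A$.

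First I would record the key identity. Let $f\neq0$ be an eigenvector of $\calC_{a,A}$ with $\calC_{a,A}f=\lambda f$. Applying both sides of \eqref{eq:1} to $f$ and using $D_a\calC_{a,A}f=\lambda D_a f$, the left-hand side collapses to $(\calC_{a,A}+\lambda I)D_a f$, while the right-hand side equals $\jap{f,v_A}\,v_A$. Thus
\[
(\calC_{a,A}+\lambda I)\,D_a f=\jap{f,v_A}\,v_A .
\]
By Lemma~\ref{lma.1} the matrix $\calC_{a,A}$ is positive definite, so every eigenvalue $\lambda$ is positive and $\calC_{a,A}+\lambda I$ is invertible; moreover $D_a$ is invertible because all $a_k$ are positive. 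Consequently, if $\jap{f,v_A}=0$ then $D_a f=0$ and hence $f=0$, a contradiction. Therefore \emph{every} eigenvector of $\calC_{a,A}$ has nonzero inner product with $v_A$.

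Next I would conclude cyclicity by a standard orthogonality argument. Let $\mathcal{K}$ be the smallest $\calC_{a,A}$-invariant subspace containing $v_A$, i.e.\ the span of the iterates $\calC_{a,A}^{\,n}v_A$ for $n\geq0$. If $\mathcal{K}\neq\bbC^N$, then $\mathcal{K}^\perp$ is a nonzero subspace invariant under the self-adjoint matrix $\calC_{a,A}$, so the restriction of $\calC_{a,A}$ to $\mathcal{K}^\perp$ is self-adjoint and hence possesses an eigenvector $f$. Since $v_A\in\mathcal{K}$, this $f$ satisfies $\jap{f,v_A}=0$, contradicting the previous paragraph. Hence $\mathcal{K}=\bbC^N$, which is exactly the statement that $v_A$ is cyclic. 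For the final assertion, a self-adjoint matrix admitting a cyclic vector has all eigenvalues simple: on each eigenspace the operator acts as a scalar, and a scalar operator has a cyclic vector only in dimension one, so no eigenspace can have dimension greater than one.

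The only genuinely substantive step is the derivation of the displayed identity and the observation that it is incompatible with $\jap{f,v_A}=0$; the rest is routine linear algebra. I expect no serious obstacle, but some care is needed to verify that $\calC_{a,A}+\lambda I$ is invertible, which is precisely the point where positive \emph{definiteness} of $\calC_{a,A}$ (rather than mere semi-definiteness) enters. I also note that simplicity of the eigenvalues can be extracted directly from the same identity without the cyclicity detour: if an eigenspace had dimension at least two, one could choose independent eigenvectors $f,g$ in it, observe that $D_a f$ and $D_a g$ are both multiples of $(\calC_{a,A}+\lambda I)^{-1}v_A$, and form a nonzero combination $h$ with $D_a h=0$, again contradicting invertibility of $D_a$.
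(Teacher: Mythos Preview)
Your proof is correct and follows essentially the same approach as the paper: both arguments apply the displacement equation \eqref{eq:1} to an eigenvector $f$ assumed orthogonal to $v_A$, obtain (in your formulation) $(\calC_{a,A}+\lambda I)D_af=0$ and hence $D_af=0$, contradicting $f\neq0$; the paper phrases the same step as ``$D_af$ would be an eigenvector with eigenvalue $-\lambda<0$'', which is just another way of saying $\calC_{a,A}+\lambda I$ is invertible. The reduction from non-cyclicity to the existence of such an $f$ is likewise the same standard spectral argument in both.
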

\begin{proof}
For an eigenvalue $b$ of ${\calC}_{a,A}$ let $E_b:=\ker ({\calC}_{a,A} - b I)$ be the corresponding eigenspace and let  $P_b$ be the orthogonal projection onto $E_b$. 
Further, let $\sigma({\calC}_{a,A})$ be the spectrum of ${\calC}_{a,A}$ (i.e. the union of all eigenvalues). By the Spectral Theorem for Hermitian matrices 
\begin{align}\label{e:SpTh}
\bbC^N = \bigoplus_{b\in\sigma( {\calC}_{a,A} )} E_b, \qquad {\calC}_{a,A} = \sum_{b\in\sigma( 
{\calC}_{a,A} )} b P_b \ ,
\end{align}
so for any polynomial $p$
\begin{align*}
p({\calC}_{a,A})  = \sum_{b\in\sigma( {\calC}_{a,A}  )} p(b) P_b  .   
\end{align*}
Let $\operatorname{Pol}$ denote the set of polynomials.  Using the above  identity and Lagrange 
interpolation we get that
\begin{align*}
\Span\{({\calC}_{a,A})^n v_A: n\geq0\} & = \{p({\calC}_{a,A}) v_A: p \in \operatorname{Pol} \}
\\
& = \Span \{ P_b v_A: b\in\sigma \left(  {\calC}_{a,A}  \right)  \}  \\
& = \bigoplus_{b\in\sigma( 
{\calC}_{a,A} )} \Span\{P_b v_A\}  .  
\end{align*}

Assume for the sake of contradiction that $v_A$ is not cyclic. 
Then, comparing the above identity  to the first identity in \eqref{e:SpTh} and recalling that $P_b v_A\in E_b$,  we see that in this case  
$\Span\{P_b v_A\} \subsetneq E_b$ for some $b\in \sigma \left(  {\calC}_{a,A}  \right)$. Therefore there is a (non-zero) eigenvector in $f\in E_b$ orthogonal to $P_b v_A$.  Since the eigenspaces $E_b$ for different $b$ are orthogonal to each other, this means that $f\perp v_A$. 
Thus, there exist $f\ne0$ such that 
\[
{\calC}_{a,A}f=b f, \quad \jap{f,v_A}=0.
\]
By Lemma~\ref{lma.1}, we have $b>0$. Applying the displacement equation \eqref{eq:1} to the vector 
$f$, we find 
\[
{\calC}_{a,A}D_a f+D_a{\calC}_{a,A}f=0.
\]
Using the eigenvalue equation for $f$, this yields
\[
{\calC}_{a,A}D_a f=-b D_a f.
\]
Thus, $D_af$ is an eigenvector of ${\calC}_{a,A}$ corresponding to the eigenvalue $-b$. By 
Lemma~\ref{lma.1}, the operator ${\calC}_{a,A}$ has no negative eigenvalues, and so $D_af=0$. Since 
$a_k>0$ for all $k$, we find that $f=0$. This is a contradiction. 
\end{proof}

\begin{remark*}
Another cyclic element of ${\calC}_{a,A}$ that is easy to identify is 
\[
w_A=(\sqrt{A_1}/a_1,\dots,\sqrt{A_N}/a_N).
\]
Indeed, multiplying the displacement equation \eqref{eq:1} by $D_a^{-1}$ on the left and on the 
right, we find 
\[
\calC_{a,A}D_a^{-1}+D_a^{-1}\calC_{a,A}=\jap{\bigcdot,w_A}w_A.
\]
From here the cyclicity of $w_A$ follows by the same pattern. 
\end{remark*}

\section{Proof of the main result}\label{sec.e}

We first need a lemma about solutions to the displacement equation. This lemma is widely known in 
Lyapunov stability theory, cf \cite{Moore}.  An infinite dimensional version of this Lemma can be 
found in  \cite[Theorem 3.1]{MPT}.  We state it in a general form; however for our purposes  we 
will only need the uniqueness part. 

\begin{lemma}\label{lma.5}
Let $X$ and $K$ be $N\times N$ matrices such that $X$ is positive definite. Then there exists a 
unique $N\times N$ matrix $Y$ that solves the displacement equation 
\begin{equation}
XY+YX=K.
\label{*1}
\end{equation}
Moreover, $Y$ is given by the integral 
\begin{equation}
Y=\int_0^\infty e^{-tX}Ke^{-tX}\dd t. 
\label{*2}
\end{equation}
\end{lemma}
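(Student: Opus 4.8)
The plan is to prove existence and uniqueness separately. For \textbf{existence}, I would verify directly that the integral formula \eqref{*2} gives a solution. Since $X$ is positive definite, we can write $X=\sum_{\lambda} \lambda Q_\lambda$ by the spectral theorem, where $Q_\lambda$ are the spectral projections and every eigenvalue $\lambda>0$. Then $e^{-tX}=\sum_\lambda e^{-t\lambda}Q_\lambda$ decays exponentially as $t\to\infty$, so the operator-norm integrand $\Norm{e^{-tX}Ke^{-tX}}$ is dominated by a constant times $e^{-2t\lambda_{\min}}$ where $\lambda_{\min}>0$ is the smallest eigenvalue; hence the integral converges absolutely and defines a bona fide matrix $Y$. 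To check it solves \eqref{*1}, I would compute $XY+YX$ by moving the factors of $X$ inside the integral and recognising the result as a perfect derivative:
\[
X e^{-tX}Ke^{-tX}+e^{-tX}Ke^{-tX}X=-\frac{\dd}{\dd t}\left(e^{-tX}Ke^{-tX}\right).
\]
Integrating from $0$ to $\infty$ and using $e^{-tX}\to 0$ as $t\to\infty$ and $e^{-tX}\to I$ as $t\to 0$ then yields $XY+YX=K$, as required.

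For \textbf{uniqueness}, which the paper flags as the only part actually needed, it suffices by linearity to show that the homogeneous equation $XY+YX=0$ forces $Y=0$. Given such a $Y$, the identity $XY=-YX$ says that $Y$ intertwines $X$ with $-X$, so it maps each eigenspace of $X$ for eigenvalue $\lambda$ into the eigenspace of $X$ for eigenvalue $-\lambda$. But $X$ is positive definite, so $-\lambda<0$ is never an eigenvalue of $X$; the target eigenspace is therefore $\{0\}$, and $Y$ annihilates every eigenvector of $X$. Since the eigenvectors of $X$ span $\bbC^N$, this gives $Y=0$. Concretely, in terms of the spectral projections, applying $Q_\mu$ on the left and $Q_\lambda$ on the right of $XY+YX=0$ produces $(\mu+\lambda)Q_\mu Y Q_\lambda=0$, and since $\mu+\lambda>0$ for all pairs of positive eigenvalues, each block $Q_\mu Y Q_\lambda$ vanishes and hence $Y=0$.

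I do not anticipate a serious obstacle here; the lemma is standard. The only point requiring a little care is the convergence of the integral in the existence part, which hinges on positive definiteness of $X$ guaranteeing strictly positive eigenvalues and hence genuine exponential decay of $e^{-tX}$ — without this, the integral could diverge. Since uniqueness is all that is used downstream, I would present the uniqueness argument carefully and treat existence via the explicit formula as a direct verification.
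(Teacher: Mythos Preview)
Your existence argument is essentially identical to the paper's: both note absolute convergence of the integral from positive definiteness, recognise the integrand $e^{-tX}Ke^{-tX}$ as minus a perfect derivative, and integrate. Your uniqueness argument, however, takes a different route. The paper reuses the integral trick: given any solution $\widetilde Y$, it sets $G(t)=e^{-tX}\widetilde Y e^{-tX}$, writes $\widetilde Y=G(0)=-\int_0^\infty G'(t)\,\dd t$, and uses the displacement equation to identify this with the original integral, so $\widetilde Y=Y$. You instead reduce to the homogeneous equation $XY+YX=0$ and kill each spectral block $Q_\mu Y Q_\lambda$ via $(\mu+\lambda)>0$. Your argument is purely algebraic and in fact needs only that no two eigenvalues of $X$ sum to zero, a weaker hypothesis than positive definiteness; the paper's argument is slicker in that it recycles the same computation, and it yields directly that every solution equals the explicit integral rather than passing through linearity. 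Both are correct and standard.
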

\begin{proof}
We first note that the integral in \eqref{*2} converges absolutely because $X$ is positive 
definite. Let $Y$ be given by the integral \eqref{*2} and let $F(t)=e^{-tX}Ke^{-tX}$. We have
\[
F'(t)=-e^{-tX}(XK+KX)e^{-tX}
\]
and therefore
\[
XY+YX=\int_0^\infty e^{-tX}(XK+KX)e^{-tX}\dd t=-\int_0^\infty F'(t)\dd t=F(0)=K.
\]
Thus, $Y$ is a solution to the displacement equation \eqref{*1}. To prove uniqueness, assume 
$\widetilde Y$ is a solution to \eqref{*1} and let $G(t)=e^{-tX}\widetilde{Y}e^{-tX}$. Then by a 
similar argument and using the displacement equation, we find
\begin{align*}
\widetilde Y&=G(0)=-\int_0^\infty G'(t)\dd t
=\int_0^\infty e^{-tX}(X\widetilde{Y}+\widetilde{Y}X)e^{-tX}\dd t
\\
&=\int_0^\infty e^{-tX}K e^{-tX}\dd t
=Y, 
\end{align*}
which proves uniqueness. 
\end{proof}

\begin{lemma}\label{lma.6}
Let $(b,{B})=\Omega(a,A)$. Then there exists an orthogonal $N\times N$ matrix $U$ such that:
\begin{equation}
Uv_A=v_B,
\quad
U{\calC}_{a,A}=D_bU, 
\quad
UD_a={\calC}_{b,{B}}U.
\label{eq:4}
\end{equation}
\end{lemma}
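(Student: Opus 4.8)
The plan is to construct $U$ explicitly from the orthonormal eigenbasis of $\calC_{a,A}$ and then to pin down the remaining relation using the uniqueness part of Lemma~\ref{lma.5}. First I would define $U$ to be the real orthogonal matrix whose $k$'th row is $f_k^T$; equivalently $U=F^T$, where $F=(f_1,\dots,f_N)$ is the matrix of normalised eigenvectors, so that $(Ux)_k=\jap{x,f_k}$. Orthogonality of $U$ is just orthonormality of $\{f_k\}$. The relation $U\calC_{a,A}=D_bU$ is then simply the statement $F^T\calC_{a,A}F=D_b$ that $U$ diagonalises $\calC_{a,A}$; I will also use its transpose $\calC_{a,A}U^T=U^TD_b$ below. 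For the first relation, since the $f_k$ are real and determined up to sign, I would fix each sign so that $\jap{v_A,f_k}\geq0$; by Lemma~\ref{lma.3} the vector $v_A$ is cyclic, whence $\jap{v_A,f_k}\neq0$ for every $k$, so in fact $\jap{v_A,f_k}>0$. Then the $k$'th component of $Uv_A$ is $\jap{v_A,f_k}=\sqrt{B_k}$ by \eqref{eq:20}, which gives $Uv_A=v_B$.

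The third relation $UD_a=\calC_{b,B}U$, equivalently $UD_aU^T=\calC_{b,B}$, is the substantive point. I would set $\calM:=UD_aU^T$ and conjugate the displacement equation \eqref{eq:1} for $\calC_{a,A}$ by $U$, i.e. apply the transformation $X\mapsto UXU^T$ to both sides. On the left-hand side, inserting $U\calC_{a,A}=D_bU$ and $\calC_{a,A}U^T=U^TD_b$ turns $U\calC_{a,A}D_aU^T+UD_a\calC_{a,A}U^T$ into $D_b\calM+\calM D_b$. On the right-hand side, conjugating the rank-one operator $\jap{\bigcdot,v_A}v_A$ by the orthogonal matrix $U$ produces $\jap{\bigcdot,Uv_A}Uv_A=\jap{\bigcdot,v_B}v_B$, using $Uv_A=v_B$ from the previous step. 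Hence $\calM$ satisfies
\[
D_b\calM+\calM D_b=\jap{\bigcdot,v_B}v_B.
\]

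To finish I would observe that $(b,B)\in Z_N$: the $b_k$ are positive (Lemma~\ref{lma.1}), distinct (Lemma~\ref{lma.3}) and listed in increasing order, while $B_k>0$ by cyclicity as above. Therefore $\calC_{b,B}$ is well defined and, by Lemma~\ref{lma.2} applied to the pair $(b,B)$, satisfies exactly the same displacement equation $D_b\calC_{b,B}+\calC_{b,B}D_b=\jap{\bigcdot,v_B}v_B$. Since $D_b$ is positive definite, Lemma~\ref{lma.5} guarantees that this equation has a unique solution, so $\calM=\calC_{b,B}$, which is the desired identity $UD_a=\calC_{b,B}U$. The main obstacle is less a computation than getting the algebra of the conjugation right, in particular tracking the transpose relation $\calC_{a,A}U^T=U^TD_b$ and checking that conjugation sends the rank-one right-hand side to the correctly weighted matrix $\jap{\bigcdot,v_B}v_B$; once that is in place, the uniqueness from Lemma~\ref{lma.5} does all the remaining work.
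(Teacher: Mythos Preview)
Your proposal is correct and follows essentially the same route as the paper: define $U$ as the change-of-basis matrix to the eigenbasis with signs fixed by $\jap{v_A,f_k}>0$, read off $Uv_A=v_B$ and $U\calC_{a,A}=D_bU$, then conjugate the displacement equation \eqref{eq:1} by $U$ and invoke the uniqueness in Lemma~\ref{lma.5} (with $X=D_b$) to identify $UD_aU^T$ with $\calC_{b,B}$. Your added remarks that the $f_k$ may be taken real and that $(b,B)\in Z_N$ are welcome clarifications but do not change the argument.
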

Before embarking on the proof, we make explicit our notation for clarity: $D_b$ is the $N\times N$ 
diagonal matrix with $(b_1,\dots,b_N)$ on the diagonal, ${\calC}_{b,{B}}$ is the Cauchy matrix 
\eqref{eq:17} with $(b,B)$ in place of $(a,A)$, and 
\[
v_B=(\sqrt{B_1},\dots,\sqrt{B_N}).
\]
\begin{proof}[Proof of Lemma~\ref{lma.6}]
Let $f_1,\dots,f_N$ be the normalised eigenvectors of ${\calC}_{a,A}$, corresponding to the 
eigenvalues $b_1,\dots,b_N$. Each vector $f_k$ is uniquely defined up to multiplication by a 
unimodular factor. Let us choose these factors such that 
\[
\jap{v_A,f_k}>0
\]
for all $k$. We recall that the inner product $\jap{v_A,f_k}$ is non-zero by Lemma~\ref{lma.3}. 

We define the map $U$ in $\bbC^N$ as
\[
Ux=(\jap{x,f_1},\dots,\jap{x,f_N})\in\bbC^N.
\]
In other words, $U$ is the matrix of the change of basis in $\bbC^N$ from $f_1,\dots,f_N$ to 
$e_1,\dots,e_N$. Since $\{f_k\}_{k=1}^N$ is an orthonormal basis, the matrix $U$ is orthogonal. 

Let us expand $v_A$ with respect to the basis $\{f_k\}_{k=1}^N$ and use the condition 
$\jap{v_A,f_k}>0$:
\[
v_A=\sum_{k=1}^N \jap{v_A,f_k} f_k=\sum_{k=1}^N \sqrt{B_k}f_k,
\]
according to our definition \eqref{eq:20} of $B_k$. Comparing this with the definition of $U$, we 
see that $Uv_A=v_B$, which is the first identity in \eqref{eq:4}. 

For any $x\in\bbC^N$ we have
\[
{\calC}_{a,A}x
= \sum_{k=1}^N c_k{\calC}_{a,A}f_k
=\sum_{k=1}^N c_kb_k f_k,
\quad\text{ where }\quad x=\sum_{k=1}^N c_kf_k
\]
and therefore 
\[
U{\calC}_{a,A}x=(c_1b_1,\dots,c_Nb_N).
\]
This can be written as
\[
U{\calC}_{a,A}=D_bU,
\]
and so we obtain the second identity in \eqref{eq:4}. 

The last identity in \eqref{eq:4} is less obvious. 
Let us write down the displacement equations \eqref{eq:1} for ${\calC}_{a,A}$ and 
${\calC}_{b,{B}}$: 
\begin{align}
{\calC}_{a,A}D_a+D_a{\calC}_{a,A}&=\jap{\bigcdot,v_A}v_A,
\label{eq:6}
\\
{\calC}_{b,{B}}D_b+D_b{\calC}_{b,{B}}&=\jap{\bigcdot,v_B}v_B.
\label{eq:7}
\end{align}
Let us multiply \eqref{eq:6} by $U$ on the left and by $U^*$ on the right:
\[
U{\calC}_{a,A}D_aU^*+UD_a{\calC}_{a,A}U^*=\jap{\bigcdot,Uv_A} Uv_A.
\]
Using the first two identities in \eqref{eq:4}, from here we find
\[
D_bUD_aU^*+UD_aU^*D_b=\jap{\bigcdot,v_B}v_B.
\]
Comparing this with \eqref{eq:7} and using the uniqueness part of Lemma~\ref{lma.5}, we find 
\begin{equation}
UD_aU^*={\calC}_{b,{B}},
\label{eq:8}
\end{equation}
which is exactly the last identity in \eqref{eq:4}. 
\end{proof}

\begin{proof}[Proof of Theorem~\ref{thm.4}] Let 
\[
(b,{B})=\Omega(a,A)\quad\text{ and }\quad (a',A')=\Omega(b,{B}).
\]
We need to check that $a'=a$ and $A'=A$. 
From \eqref{eq:8} we read off the eigenvalues of ${\calC}_{b,{B}}$: these are $a_1,\dots,a_N$. It 
follows that $a'=a$.

Let $U$ be the orthogonal matrix of Lemma~\ref{lma.6}. For $k=1,\dots,N$, denote $g_k=Ue_k$, where 
$e_1,\dots,e_N$ is the standard basis in $\bbC^N$. Using the last relation in \eqref{eq:4}, we find
\[
\calC_{b,B}g_k=(UD_aU^*)Ue_k=UD_ae_k=a_kUe_k=a_kg_k,
\]
and so $g_1,\dots,g_k$ is the orthonormal basis of eigenvectors of $\calC_{b,B}$. According to our 
definitions, 
\[
A'_k=\abs{\jap{v_B,g_k}}^2. 
\]
Using \eqref{eq:4}, we can rewrite this as
\[
A'_k=\abs{\jap{Uv_A,Ue_k}}^2=\abs{\jap{v_A,e_k}}^2=A_k.
\]
We have checked that $A'=A$. The proof is complete. 
\end{proof}

\section{Proofs of properties of $\Omega$}\label{sec.f}

\begin{proof}[Proof of Theorem~\ref{thm.5}]
Denote 
\[
\Omega(ta,{T} A)=(b',{B}').
\]
By the definition \eqref{eq:17} of ${\calC}_{a,A}$,  we have
\[
{\calC}_{ta,{T} A}=\frac{{T}}{t}{\calC}_{a,A}
\]
and therefore $b'=\frac{{T}}t b$. 
Let us find ${B}'$. By the above displayed equation, the eigenvectors $f_k$ of ${\calC}_{ta,{T} A}$ 
and ${\calC}_{a,A}$ coincide. We also have $v_{TA}=\sqrt{T}v_A$, and so 
\[
{B}'_k
=\abs{\jap{v_{TA},f_k}}^2
=T\abs{\jap{v_A,f_k}}^2
=TB_k.
\]
Thus, ${B}'={T}{B}$, as claimed. 
\end{proof}

\begin{proof}[Proof of Theorem~\ref{thm.6}]
We recall that by Lemma~\ref{lma.6} we have $v_B=Uv_A$, where $U$ is an orthogonal matrix. 
Hence $\norm{v_B}^2=\norm{v_A}^2$ (here $\norm{\cdot}$ is the standard Euclidean norm), which 
rewrites as the first identity \eqref{eq:12}. 

Let us write the second identity in \eqref{eq:4} as 
\begin{equation}
U{\calC}_{a,A}U^*=D_b.
\label{eq:15}
\end{equation}
Taking traces, we find
\[
\Tr{\calC}_{a,A}=\Tr D_b.
\]
Computing the traces yields \eqref{eq:13}.
Finally, from \eqref{eq:15} by squaring and taking traces we find
\[
\Tr{\calC}_{a,A}^2=\Tr D_b^2,
\]
and computing the traces on both sides yields \eqref{eq:14}.
\end{proof}

\section{Connection with results of \cite{PushTreil}}\label{sec.g}

\subsection{Pairs $(a,A)$ as measures}
It is illuminating to think of elements of the set $Z_N$ as measures. Indeed, let us associate with 
$(a,A)\in Z_N$ the measure $\alpha$ which consists of $N$ point masses at $a_1,\dots a_N$ with 
$\alpha(\{a_k\})=A_k$. Let us consider the $L^2$-space of functions on the real line with the 
measure $\alpha$; we denote it by $L^2(\alpha)$. Of course, the functions $f\in L^2(\alpha)$ are 
well-defined only at the points $a_1,\dots,a_N$, and so $L^2(\alpha)$ is isomorphic to $\bbC^N$. 
More explicitly, let us define the unitary map $V:L^2(\alpha)\to\bbC^N$ by 
\[
Vf=(\sqrt{A_1}f(a_1),\dots,\sqrt{A_N}f(a_N)).
\]
Then 
\[
G_\alpha=V^*{\calC}_{a,A}V
\]
is the operator in $L^2(\alpha)$, explicitly given by 
\begin{equation}
{G}_\alpha: f\mapsto \int_{\bbR}\frac{f(y)}{x+y}\dd\alpha(y). 
\label{eq:18}
\end{equation}
We see that $V^*v_A=\1$, i.e. the function identically equal to $1$ a.e. with respect to the 
measure $\alpha$. By Lemma~\ref{lma.3}, the function $\1$ is a cyclic element of $G_\alpha$. The 
spectral map $\Omega$ maps the measure $\alpha$ to another measure $\beta$ which is uniquely 
defined by the identity
\[
\jap{\varphi({G}_\alpha)\1,\1}_{\alpha}=\int_{\bbR}\varphi(x)\dd\beta(x)
\]
for all bounded continuous test-functions $\varphi$, where $\jap{\cdot,\cdot}_\alpha$ is the inner 
product in $L^2(\alpha)$. In other words, $\beta$ is the spectral measure of ${G}_\alpha$ 
corresponding to the vector $\1\in L^2(\alpha)$. Theorem~\ref{thm.4} says that the map 
$\alpha\mapsto\beta$ is an involution on the set of all measures that are finite linear 
combinations of point masses on the positive half-line $(0,\infty)$. 

In \cite{PushTreil}, operators $G_\alpha$ are studied for the class of all finite measures $\alpha$ 
on $[0,\infty)$ with $\alpha(\{0\})=0$. In this case, the operator $G_\alpha$ is in general 
unbounded. Nevertheless it is possible to define  $G_\alpha$ as a self-adjoint operator on 
$L^2(\alpha)$ and to define the measure $\beta$ as the spectral measure of $G_\alpha$ corresponding 
to the cyclic element $\1\in L^2(\alpha)$. This defines the spectral map $\alpha\mapsto\beta$ on 
finite measures. The main result of \cite{PushTreil} is that $\Omega$ is an involution on finite 
measures.

\subsection{Connection with Hankel operators}
The original motivation for this paper comes from the spectral theory of Hankel operators, studied 
in \cite{PushTreil}. We briefly explain the connection and refer to \cite{PushTreil} for the 
details (see also the important precursor \cite{MPT}). 

An integral Hankel operator in $L^2(\bbR_+)$ is the operator 
\begin{equation}
f\mapsto \int_0^\infty h(t+s)f(s)\dd s, 
\label{eq:16}
\end{equation}
where $h$ is known as the \emph{kernel function}. A Hankel operator is self-adjoint and positive 
semi-definite on $L^2(\bbR_+)$ if and only if the kernel function $h$ is the Laplace transform of a 
positive measure $\alpha$ on $\bbR_+$:
\[
h(t)=\int_0^\infty e^{-tx}\dd\alpha(x), \quad t>0.
\]
Let us denote by $\Gamma_\alpha$ the Hankel operator \eqref{eq:16} corresponding to a measure 
$\alpha$. Finite rank Hankel operators $\Gamma_\alpha$ correspond to measures $\alpha$ that are 
finite linear combinations of point masses on $(0,\infty)$. 

Let us explain the connection with the operators ${G}_\alpha$ as in \eqref{eq:18}. Let 
\[
L_\alpha: L^2(\alpha)\to L^2(\bbR)
\]
be the operator of the Laplace transform, 
\[
(L_\alpha f)(t)=\int_0^\infty f(x)e^{-tx}\dd\alpha(x).
\]
For general measures $\alpha$, the operator $L_\alpha$ is unbounded and its precise definition is a 
non-trivial question, discussed in \cite{PushTreil}. Of course, for $\alpha$ that are finite linear 
combinations of point masses, the space $L^2(\alpha)$ is finite-dimensional and so the definition 
of $L_\alpha$ is straightforward. 

We then observe that 
\[
\Gamma_\alpha=L_\alpha L_\alpha^*\quad\text{ and }\quad G_\alpha=L_\alpha^*L_\alpha. 
\]
It follows that the operators 
\[
\Gamma_\alpha|_{(\Ker\Gamma_\alpha)^\perp}\quad \text{ and }\quad G_\alpha
\]
are unitarily equivalent. This reduces the spectral analysis of the Hankel operator $\Gamma_\alpha$ 
to the analysis of the operator $G_\alpha$.

\end{document}